\numberwithin{equation}{section}
\newtheorem{Theorem}{Theorem}[section]
\newtheorem{Lemma}[Theorem]{Lemma}
\newtheorem{Proposition}[Theorem]{Proposition}
{ \theoremstyle{definition}
\newtheorem{Remark}[Theorem]{Remark} }
\newcommand{\vol}{\operatorname{vol}}
\newcommand{\Ric} {\operatorname{Ric}}
\newcommand{\Hess} {\operatorname{Hess}}
\newcommand{\R}{\mathbb{R}}
\newcommand{ \tr } {\operatorname{tr}}
\newcommand{\Rm} {\operatorname{Rm}}
\newcommand{\scal} {\operatorname{scal}}
\newcommand{ \Sym } {\operatorname{Sym}}
\newcommand{ \floor }[1]{ \lfloor #1 \rfloor }
\begin{document}
\allowdisplaybreaks

\newcommand{\arXivNumber}{2005.02604}

\renewcommand{\thefootnote}{}

\renewcommand{\PaperNumber}{064}

\FirstPageHeading

\ShortArticleName{The Bochner Technique and Weighted Curvatures}

\ArticleName{The Bochner Technique and Weighted Curvatures\footnote{This paper is a~contribution to the Special Issue on Scalar and Ricci Curvature in honor of Misha Gromov on his 75th Birthday. The full collection is available at \href{https://www.emis.de/journals/SIGMA/Gromov.html}{https://www.emis.de/journals/SIGMA/Gromov.html}}}

\Author{Peter PETERSEN and Matthias WINK}

\AuthorNameForHeading{P.~Petersen and M.~Wink}

\Address{Department of Mathematics, University of California,\\ 520 Portola Plaza, Los Angeles, CA, 90095, USA}
\Email{\href{mailto:petersen@math.ucla.edu}{petersen@math.ucla.edu}, \href{mailto:wink@math.ucla.edu}{wink@math.ucla.edu}}

\ArticleDates{Received May 22, 2020, in final form June 29, 2020; Published online July 09, 2020}

\Abstract{In this note we study the Bochner formula on smooth metric measure spaces. We introduce weighted curvature conditions that imply vanishing of all Betti numbers.}

\Keywords{Bochner technique; smooth metric measure spaces; Hodge theory}

\Classification{53B20; 53C20; 53C21; 53C23; 58A14}

\renewcommand{\thefootnote}{\arabic{footnote}}
\setcounter{footnote}{0}

\vspace{-3mm}

\section{Introduction}

Let $(M,g)$ be an oriented Riemannian manifold, let $\vol_g$ denote its volume form and let $f$ be a smooth function on $M$. The triple $\big(M,g, e^{-f} \vol_g\big)$ is called a smooth metric measure space. Based on considerations from diffusion processes, Bakry--\'Emery \cite{BakryEmeryDiffusion} introduced the tensor \begin{gather*}
\Ric_f = \Ric + \Hess f
\end{gather*}
as a weighted Ricci curvature for a geometric measure space. In fact, this tensor appeared earlier in work of Lichnerowicz \cite{LichnerowiczBETensor}. Volume comparison theorems for smooth metric measure spaces with $\Ric_f$ bounded from below have been established by Qian~\cite{QianEstimatesWeightedVolume}, Lott \cite{LottGeometryBETensor}, Bakry--Qian \cite{BakryQianVolumeComparison} and Wei--Wylie \cite{WeiWylieComparosionGeoBE}.

In this note we study the Bochner technique on smooth metric measure spaces. The distortion of the volume element introduces a diffusion term to the Bochner formula
\begin{gather*}
\Delta_f \omega = ( d d^{*}_f + d^{*}_f d) \omega= \nabla^{*}_f \nabla \omega + \Ric(\omega) - ( \Hess f ) \omega,
\end{gather*}
where $\Ric$ is the Bochner operator on $p$-forms. Lott \cite{LottGeometryBETensor} proved that if $\Ric_f \geq 0$, then all $\Delta_f$-harmonic $1$-forms are parallel and, for compact manifolds, $H^1(M;\R)$ is isomorphic to the space of all parallel $1$-forms $\omega$ which satisfy $\big\langle \nabla e^{-f}, \omega \big\rangle = 0$. Moreover, if $\Ric_f > 0$, then all $\Delta_f$-harmonic $1$-forms vanish.

We introduce new weighted curvature conditions that imply rigidity and vanishing results for $\Delta_f$-harmonic $p$-forms for $p \geq 1$. We can restrict to $p$-forms $\omega$ for $1 \leq p \leq \big\lfloor \frac{n}{2}\big\rfloor$ since $\omega$ is parallel if and only if $\ast \omega$ is parallel, where $\ast$ denotes the Hodge star.

By convention, we will refer to the eigenvalues of the curvature operator simply as the eigenvalues of the associated curvature tensor.

\begin{theorem*}
Let $\big(M^n,g, e^{-f} \vol_g\big)$ be a smooth metric measure space. For $1 \leq p < \frac{n}{2}$ set
\begin{gather*}
h = \frac{1}{n-2p} \Hess f - \frac{\Delta f}{2(n-p)(n-2p)} g.
\end{gather*}
Let $\omega$ be a $\Delta_f$-harmonic $p$-form with $|\omega| \in L^2\big(M, e^{-f} \vol_g\big)$ for $1 \leq p < \frac{n}{2}$. Let $\lambda_1 \leq \dots \leq \lambda_{\genfrac(){0pt}{2}{n}{2}}$ denote the eigenvalues of the weighted curvature tensor $\Rm + h \owedge g$.

If $\lambda_1 + \dots + \lambda_{n-p} \geq 0$, then $\omega$ is parallel. If in addition $M$ is compact, then $H^{p}(M)= \big\lbrace \omega \in \Omega^p(M) \, \vert \, \nabla \omega =0 \ \text{and} \ i_{\nabla f} \omega = 0 \big\rbrace$.

If $\lambda_1 + \dots + \lambda_{n-p} > 0$, then $\omega$ vanishes. If in addition $M$ is compact, then the Betti numbers $b_p(M)$ and $b_{n-p}(M)$ vanish for $1 \leq p < \frac{n}{2}$.
\end{theorem*}

For $p=1$ the Ricci curvature of the modified curvature tensor is the Bakry--\'Emery Ricci tensor, and the assumption in the Theorem implies that it is nonnegative. In this sense the Theorem is a generalization of Lott's \cite{LottGeometryBETensor} results for $1$-forms.

A stronger curvature assumption also allows control in the middle dimension $p = \frac{n}{2}$. Recall that a curvature tensor is $l$-nonnegative (positive) if the sum of its lowest $l$ eigenvalues is nonnegative (positive).

\begin{proposition*}
Let $\big(M^n,g, e^{-f} \vol_g\big)$ be a smooth metric measure space. Let $\mu_1 \leq \dots \leq \mu_n$ denote the eigenvalues of $\Hess f$ and let $1 \leq p \leq \big\lfloor  \frac{n}{2}\big\rfloor$.

Let $\omega$ be a $\Delta_f$-harmonic $p$-form with $|\omega| \in L^2\big(M, e^{-f} \vol_g\big)$. If the weighted curvature tensor
\begin{gather*}
\Rm + \frac{\sum\limits_{i=1}^p \mu_i}{2p(n-p)} g \owedge g
\end{gather*}
is $(n-p)$-nonnegative, then $\omega$ is parallel. If it is $(n-p)$-positive, then $\omega$ vanishes.

In particular, if $M$ is compact, then $H^{p}(M)= \big\lbrace \omega \in \Omega^p(M) \, \vert \, \nabla \omega =0 \ \text{and} \ i_{\nabla f} \omega = 0 \big\rbrace$ and in case the weighted curvature tensor is $(n-p)$-positive, the Betti numbers $b_p(M)$ and $b_{n-p}(M)$ vanish.
\end{proposition*}

The notation in this paper builds up on the presentation in \cite[Chapter~9]{PetersenRiemGeom} and \cite{PetersenWinkBochner}.

\section{Preliminaries}

\subsection{Algebraic curvature tensors}
For an $n$-dimensional Euclidean vector space $(V,g)$ let $\mathcal{T}^{(0,k)}(V)$ denote the vector space of $(0,k)$-tensors and $\operatorname{Sym}^2(V)$ the vector space of symmetric $(0,2)$-tensors on $V$.

Let $\mathcal{C}(V)$ denote the vector space of $(0,4)$-tensors with $T(X,Y,Z,W) = - T(Y,X,Z,W) = T(Z,W,X,Y)$. If $T$ also satisfies the algebraic Bianchi identity, then $T$ is called algebraic curvature tensor, $T \in \mathcal{C}_B(V)$.

The Kulkarni--Nomizu product of $S_1, S_2 \in \operatorname{Sym}^2(V)$ is given by
\begin{gather*}
(S_1 \owedge S_2)(X,Y,Z,W) =   S_1(X,Z)S_2(Y,W)-S_1(X,W)S_2(Y,Z) \\
\hphantom{(S_1 \owedge S_2)(X,Y,Z,W) =}{}  +S_1(Y,W)S_2(X,Z)-S_1(Y,Z)S_2(X,W).
\end{gather*}
With this convention the algebraic curvature tensor $I= \frac{1}{2} g \owedge g$ corresponds to the curvature tensor of the unit sphere.

Recall that the decomposition of $\mathcal{C}(V)$ into $O(n)$-irreducible components is given by
\begin{gather*}
\mathcal{C}(V) = \langle I \rangle \oplus \langle \mathring{\Ric} \rangle \oplus \langle W \rangle \oplus \Lambda^4 V,
\end{gather*}
where $\langle \mathring{\Ric} \rangle = S_0^2(V) \owedge g$ is the subspace of algebraic curvature tensors of trace-free Ricci type, $S_0^2(V)= \big\lbrace h \in \operatorname{Sym}^2(V) \, \vert \, \tr(h)= 0 \big\rbrace$, and $\langle W \rangle$ denotes the subspace of Weyl tensors.

Explicitly, every algebraic curvature tensor decomposes as
\begin{gather*}
\Rm = \frac{\scal}{2(n-1)n} g \owedge g + \frac{1}{n-2} \mathring{\Ric} \owedge g + W.
\end{gather*}

\subsection{Lichnerowicz Laplacians on smooth metric measure spaces}
Let $(M,g,f)$ be a smooth metric measure space. The formal adjoints of the exterior and covariant derivative with respect to the measure $e^{-f} \vol_g$ are given by
\begin{gather*}
d^{*}_f = d^{*} + i_{\nabla f} \qquad \text{and} \qquad \nabla^{*}_f = \nabla^{*} + i_{\nabla f}.
\end{gather*}
More generally, for a vector field $U$ on $M$, we will consider
\begin{gather*}
d^{*}_U = d^{*} + i_{U} \qquad \text{and} \qquad \nabla^{*}_U = \nabla^{*} + i_{U}.
\end{gather*}

The associated generalized Lichnerowicz Laplacian on $(0,k)$-tensors is given by
\begin{gather*}
\Delta_U T = \nabla^{*}_U \nabla T + \Ric(T) - (\nabla U) T,
\end{gather*}
where the curvature term is given by
\begin{gather*}
\Ric(T)(X_1, \dots, X_k) = \sum_{i=1}^k \sum_{j=1}^n (R(X_i,e_j)T) (X_1, \dots, e_j, \dots, X_k).
\end{gather*}
A tensor $T$ is called {\em $U$-harmonic} if $\Delta_U T =0$.

To emphasize that the curvature term is calculated with respect to the curvature tensor $\Rm$, we will also write $\Ric_{\Rm}(T)$ for $\Ric(T)$.

Recall that for an endomorphism $L$ of $V$ and a $(0,k)$-tensor $T$ we have
\begin{gather*}
(LT)(X_1, \dots, X_k) = - \sum_{i=1}^k T(X_1, \dots,L(X_i), \dots, X_k).
\end{gather*}
In particular, the Ricci identity implies that the definition of the curvature term in the Lichnerowicz Laplacian naturally carries over to algebraic curvature tensors.

\begin{Proposition}\label{BochnerFormulasExample}
Let $(M,g)$ be a Riemannian manifold and $U$ a vector field on~$M$. For a~$(0,k)$-tensor $T$ on $M$ set $\Ric_U(T)=\Ric(T) - (\nabla U)T$.
\begin{enumerate}\itemsep=0pt
\item[$(a)$] Every $p$-form satisfies
\begin{gather*}
( d d^{*}_U + d^{*}_U d ) \omega = \nabla^{*}_U \nabla \omega + \Ric_U(\omega).
\end{gather*}
\item[$(b)$] Every symmetric $(0,2)$-tensor satisfies
\begin{gather*}
 ( \nabla_X \nabla^{*}_U T  )(X) + \big( \nabla^{*}_U d^{\nabla} T \big) (X,X) =  ( \nabla^{*}_U \nabla T  )(X,X)+ \frac{1}{2}  ( \Ric_U T  ) (X,X),
\end{gather*}
where $d^{\nabla}T(Z,X,Y)=  (\nabla_X T  )(Y,Z) - (\nabla_Y T )(X,Z)$.
\end{enumerate}
\end{Proposition}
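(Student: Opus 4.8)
The plan is to prove both formulas by splitting the twisted operators into their classical parts and the contributions of $i_U$, reducing each identity to an untwisted Weitzenb\"ock formula plus a routine check that the $i_U$-terms on the two sides agree. For part $(a)$, expanding $d^{*}_U = d^{*} + i_U$ and $\nabla^{*}_U = \nabla^{*} + i_U$ gives
\begin{gather*}
(dd^{*}_U + d^{*}_U d)\omega = (dd^{*} + d^{*}d)\omega + (d i_U + i_U d)\omega
\end{gather*}
and $\nabla^{*}_U\nabla\omega + \Ric_U(\omega) = \nabla^{*}\nabla\omega + \Ric(\omega) + i_U\nabla\omega - (\nabla U)\omega$. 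Cancelling the classical Bochner formula $dd^{*} + d^{*}d = \nabla^{*}\nabla + \Ric$, it remains to show $(d i_U + i_U d)\omega = i_U\nabla\omega - (\nabla U)\omega$. This follows at once from Cartan's formula $d i_U + i_U d = \mathcal{L}_U$, the contraction identity $i_U\nabla\omega = \nabla_U\omega$, and the relation $\mathcal{L}_U\omega = \nabla_U\omega - (\nabla U)\omega$, the latter being exactly the endomorphism convention $(\nabla U)\omega = -\sum_i \omega(\dots,\nabla_{X_i}U,\dots)$ recorded in the preliminaries; no curvature computation enters part $(a)$.

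For part $(b)$ the same splitting reduces the claim to the untwisted identity
\begin{gather*}
(\nabla_X\nabla^{*} T)(X) + (\nabla^{*} d^{\nabla} T)(X,X) = (\nabla^{*}\nabla T)(X,X) + \frac{1}{2}\Ric(T)(X,X)
\end{gather*}
together with a verification of the $i_U$-terms. I would establish the untwisted identity in a normal frame $\{e_j\}$ at a point with $X$ held fixed: expanding the two left-hand terms produces the mixed second derivatives $\nabla^2_{X,e_j}T$ and $\nabla^2_{e_j,X}T$ contracted against $(e_j,X)$, whose difference is controlled by the Ricci identity, and the resulting curvature contraction $\sum_j(R(X,e_j)T)(e_j,X)$ equals $\frac{1}{2}\Ric(T)(X,X)$ by the symmetry of $T$ and the definition of $\Ric(T)$ in the preliminaries. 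For the correction I would compute $(\nabla_X(i_U T))(X) = (\nabla_X T)(U,X) + T(\nabla_X U, X)$ and, contracting $i_U$ into the form slot of $d^{\nabla} T$, $(i_U d^{\nabla} T)(X,X) = (\nabla_U T)(X,X) - (\nabla_X T)(U,X)$; their sum equals $(\nabla_U T)(X,X) + T(\nabla_X U, X)$, which matches the right-hand contribution $(i_U\nabla T)(X,X) - \frac{1}{2}(\nabla U)T(X,X)$ since $-\frac{1}{2}(\nabla U)T(X,X) = T(\nabla_X U, X)$.

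The main difficulty is purely one of bookkeeping in part $(b)$. One must view $T$ and $d^{\nabla} T$ as forms with values in $T^{*}M$ and let $\nabla^{*}$ and $i_U$ act on the \emph{form} slots rather than the value slot; with the naive convention the $d^{\nabla}$-contraction vanishes on the diagonal $(X,X)$ and the correction terms fail to balance. One must also pin down the sign in the Ricci identity so that the commutator $\nabla^2_{e_j,X}T - \nabla^2_{X,e_j}T$ reproduces $\Ric(T)$ with precisely the factor $\frac{1}{2}$; this is the only place curvature appears, and it is dictated by the conventions already fixed in the preliminaries. Part $(a)$, by contrast, is immediate once Cartan's formula and the Lie--covariant relation are recorded.
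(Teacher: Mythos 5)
Your proof is correct and takes essentially the same route as the paper: both parts are reduced to the classical $U=0$ Weitzenb\"ock identities, part (a) via Cartan's formula together with $\mathcal{L}_U = \nabla_U - (\nabla U)$, and part (b) by exactly the computation of the $i_U$-terms that the paper performs (including the key point that $i_U$ contracts a form slot of $d^{\nabla}T$, so that $(i_U d^{\nabla}T)(X,X) = (\nabla_U T)(X,X) - (\nabla_X T)(U,X)$, and that $-\tfrac{1}{2}((\nabla U)T)(X,X) = T(\nabla_X U, X)$ by symmetry of $T$). The only difference is that you additionally sketch a proof of the untwisted identity in (b), which the paper simply invokes as the known $U=0$ case.
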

\begin{proof}
(a) The case $U=0$ recovers the well-known Bochner formula. The generalized Hodge Laplacian satisfies
\begin{gather*}
d d^{*}_U + d^{*}_U d = d d^{*} + d^{*} d + d i_U + i_U d = \Delta + L_U.
\end{gather*}
In addition to the classical Lichnerowicz Laplacian we have on the right hand side
\begin{gather*}
\nabla_U - ( \nabla U ) = L_U
\end{gather*}
and thus all diffusion terms balance out.

(b) As in (a), it suffices to consider all terms that depend on $U$ and show that
\begin{gather*}
( \nabla_X i_U h ) (X) + \big( i_U d^{\nabla} h \big) (X,X) =  ( \nabla_U h ) (X,X) - \frac{1}{2} ( (\nabla U) h)(X,X).
\end{gather*}
This is a straightforward calculation
\begin{gather*}
 ( \nabla_X i_U h  ) (X)   + \big( i_U d^{\nabla} h \big) (X,X) \\
 \qquad{} =  ( \nabla_X h ) (U,X) + h ( \nabla_X U, X ) +  ( \nabla_U h ) (X,X) -  ( \nabla_ X h )  ( U, X  ) \\
\qquad{} =  ( \nabla_U h  ) (X,X) + h  ( \nabla_X U, X ) \\
\qquad{} =  ( \nabla_U h ) (X,X) - \frac{1}{2}  ( ( \nabla U ) h ) (X,X).\tag*{\qed}
\end{gather*}\renewcommand{\qed}{}
\end{proof}

\begin{Remark}The curvature tensor $\Rm$ of a Riemannian manifold satisfies
\begin{gather*}
\nabla^{*}_U \nabla \Rm + \frac{1}{2} \Ric_U( \Rm ) =   \frac{1}{2}  ( \nabla_X \nabla_U^{*} \Rm )(Y,Z,W) - \frac{1}{2}  ( \nabla_Y \nabla_U^{*} \Rm  ) (X,Z,W) \\
\hphantom{\nabla^{*}_U \nabla \Rm + \frac{1}{2} \Ric_U( \Rm ) =}{} + \frac{1}{2}  ( \nabla_Z \nabla_U^{*} \Rm  )(W, X,Y) - \frac{1}{2}  ( \nabla_W \nabla_U^{*} \Rm  ) (Z,X,Y).
\end{gather*}
A straightforward computation based on the second Bianchi identity shows that all terms that involve~$U$ cancel.
\end{Remark}

The Bochner technique with diffusion relies on the following basic observations. Firstly, the maximum principle implies:

\begin{Lemma}\label{BochnerTechniqueWithDiffusion}
Let $(M,g)$ be a Riemannian manifold, $U$ a vector field on~$M$. Let~$T$ be a tensor such that
\begin{gather*}
g ( \nabla_U^{*} \nabla T, T) \leq 0.
\end{gather*}
If $| T |$ has a maximum, then $T$ is parallel.
\end{Lemma}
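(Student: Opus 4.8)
The plan is to reduce the statement to the scalar maximum principle by studying the smooth function $\phi = \tfrac12 g(T,T) = \tfrac12 |T|^2$, whose maxima coincide with those of $|T|$. The goal is to produce a weighted Bochner identity that expresses $\nabla^*_U\nabla\phi$ in terms of $g(\nabla^*_U\nabla T, T)$ and $|\nabla T|^2$, to deduce from the hypothesis that $\phi$ is subharmonic for the associated drift Laplacian, and then to invoke the strong maximum principle to force $\phi$ to be constant and hence $\nabla T = 0$.

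First I would compute the pointwise identity. In a local orthonormal frame $\{e_i\}$ that is normal at the point of interest, the standard Bochner computation gives $\nabla^*\nabla\phi = g(\nabla^*\nabla T, T) - |\nabla T|^2$, since $e_i e_i \big(\tfrac12 |T|^2\big) = g(\nabla_{e_i}\nabla_{e_i}T, T) + |\nabla_{e_i}T|^2$. The only additional contribution coming from the weighting is $i_U \nabla\phi = (\nabla\phi)(U) = U\big(\tfrac12|T|^2\big) = g(\nabla_U T, T) = g(i_U\nabla T, T)$. Adding these and using $\nabla^*_U = \nabla^* + i_U$ together with $\nabla^*_U\nabla T = \nabla^*\nabla T + i_U\nabla T$, the terms assemble into
\[
\nabla^*_U\nabla \phi = g\big(\nabla^*_U\nabla T, T\big) - |\nabla T|^2.
\]
Writing $L = -\nabla^*_U\nabla$ for the induced drift Laplacian on functions, this reads $L\phi = |\nabla T|^2 - g(\nabla^*_U\nabla T, T)$, and the hypothesis $g(\nabla^*_U\nabla T, T)\le 0$ yields $L\phi \ge |\nabla T|^2 \ge 0$, so $\phi$ is $L$-subharmonic.

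It then remains to apply the maximum principle. The operator $L = -\nabla^*_U\nabla$ has principal part $-\nabla^*\nabla = \operatorname{div}\operatorname{grad}$, so it is (locally uniformly) elliptic with smooth coefficients and no zeroth-order term; the diffusion enters only as the first-order drift $-i_U\nabla$ and does not affect the principal symbol. Hence Hopf's strong maximum principle applies to the subsolution $\phi$: if $|T|$, and therefore $\phi$, attains a maximum (necessarily at an interior point when $M$ is without boundary), then $\phi$ is locally constant, and by connectedness of $M$ constant throughout. Substituting back into the identity forces $|\nabla T|^2 \equiv 0$, that is, $T$ is parallel. The main point requiring care is precisely this bookkeeping of the diffusion term, checking that it appears solely as a drift so that the ellipticity and the hypotheses of the strong maximum principle remain intact; the underlying Bochner computation is otherwise routine.
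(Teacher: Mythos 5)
Your proof is correct and is exactly the argument the paper has in mind: the paper offers no written proof beyond the phrase ``the maximum principle implies,'' and your computation of the drift Bochner identity $\nabla^*_U\nabla\bigl(\tfrac12|T|^2\bigr) = g(\nabla^*_U\nabla T,T) - |\nabla T|^2$ followed by Hopf's strong maximum principle for the elliptic operator with drift is the standard way to fill that in. The sign bookkeeping ($i_U\nabla T = \nabla_U T$, so the drift terms on both sides match) is handled correctly.
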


\begin{Remark}\label{IsomorphismDeRhamColomology}
Note that a $p$-form $\omega$ satisfies $(dd^{*}_U + d^{*}_Ud ) \omega = 0$ if and only if $d \omega =0$ and $d^{*}_U \omega = 0$.

As in \cite{LottGeometryBETensor}, if $M$ is compact and oriented, standard elliptic theory implies that
\begin{gather*}
H^p(M) = \big\lbrace \omega \in \Omega^p(M) \, \vert \, d \omega =0 \ \text{and} \ d^{*}_U \omega = 0 \big\rbrace.
\end{gather*}
Suppose that $\Ric_U \geq 0$ on $p$-forms. It follows that a $p$-form $\omega$ is $U$-harmonic if and only if $\omega$ is parallel and $i_U \omega = 0$. Thus,
\begin{gather*}
H^{p}(M)= \big\lbrace \omega \in \Omega^p(M) \, \vert \, \nabla \omega =0 \ \text{and} \ i_{U} \omega = 0 \big\rbrace.
\end{gather*}
\end{Remark}

If $U=\nabla f$, then we can use integration to conclude:

\begin{Lemma}\label{BochnerTechniqueSMMS}
Let $(M,g,f)$ be a smooth metric measure space with $\int_M e^{-f} \vol_g < \infty$. If $T$ is a~$(0,k)$-tensor with $|T| \in L^{2}\big(M, e^{-f} \vol_g\big)$ and
\begin{gather*}
g ( \nabla_f^{*} \nabla T, T) \leq 0,
\end{gather*}
then $T$ is parallel.
\end{Lemma}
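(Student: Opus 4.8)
The plan is to derive $\nabla T = 0$ by an integration-by-parts argument against the weighted measure, localized by cutoff functions to handle the noncompactness. The structural fact I would start from is that $\nabla_f^{*} = \nabla^{*} + i_{\nabla f}$ is the formal adjoint of $\nabla$ with respect to $e^{-f}\vol_g$, so that for every compactly supported $(0,k)$-tensor $S$ and $(0,k+1)$-tensor $A$,
\begin{gather*}
\int_M g(\nabla S, A)\, e^{-f}\vol_g = \int_M g\big(S, \nabla_f^{*} A\big)\, e^{-f}\vol_g.
\end{gather*}
If $M$ were compact the lemma would be immediate: taking $S = T$ and $A = \nabla T$ gives $\int_M |\nabla T|^2 e^{-f}\vol_g = \int_M g\big(T, \nabla_f^{*}\nabla T\big) e^{-f}\vol_g \leq 0$, forcing $\nabla T = 0$. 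The entire point is therefore to run this computation without compactness, where the $L^2$-hypothesis must replace the absence of boundary.

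First I would fix a basepoint $o \in M$ and, using completeness of $(M,g)$, choose Lipschitz cutoffs $\phi_R$ with $\phi_R \equiv 1$ on the geodesic ball $B_R(o)$, $\operatorname{supp}\phi_R \subset B_{2R}(o)$, and $|d\phi_R| \leq C/R$. Applying the adjoint identity to the compactly supported tensor $\phi_R^2 T$ with $A = \nabla T$, and expanding $\nabla\big(\phi_R^2 T\big) = \phi_R^2 \nabla T + 2\phi_R\, d\phi_R \otimes T$, I obtain
\begin{gather*}
\int_M \phi_R^2 |\nabla T|^2 e^{-f}\vol_g = \int_M \phi_R^2\, g\big(T, \nabla_f^{*}\nabla T\big) e^{-f}\vol_g - 2\int_M \phi_R\, g\big(d\phi_R \otimes T, \nabla T\big) e^{-f}\vol_g.
\end{gather*}
The first term on the right is $\leq 0$ by hypothesis, since $\phi_R^2 \geq 0$. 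For the second I would apply Cauchy--Schwarz together with Young's inequality,
\begin{gather*}
2 \Big| \int_M \phi_R\, g\big(d\phi_R \otimes T, \nabla T\big) e^{-f}\vol_g \Big| \leq \frac{1}{2}\int_M \phi_R^2 |\nabla T|^2 e^{-f}\vol_g + 2\int_M |d\phi_R|^2 |T|^2 e^{-f}\vol_g,
\end{gather*}
and absorb the first resulting term into the left-hand side.

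After absorption, and using $\phi_R \equiv 1$ on $B_R(o)$ together with $|d\phi_R| \leq C/R$ supported in $B_{2R}(o) \setminus B_R(o)$, this reduces to
\begin{gather*}
\frac{1}{2}\int_{B_R(o)} |\nabla T|^2 e^{-f}\vol_g \leq \frac{2C^2}{R^2}\int_{B_{2R}(o) \setminus B_R(o)} |T|^2 e^{-f}\vol_g.
\end{gather*}
Since $|T| \in L^2\big(M, e^{-f}\vol_g\big)$, the right-hand side tends to $0$ as $R \to \infty$, so letting $R \to \infty$ yields $\int_M |\nabla T|^2 e^{-f}\vol_g = 0$ and hence $\nabla T = 0$. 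The step I expect to be the main obstacle is exactly this passage from the formal identity to its localized form: a priori $\nabla T$ need not be globally square-integrable, and one cannot assume that boundary contributions at infinity vanish. The cutoff scheme circumvents both difficulties at once, producing the uniform local bound on $\int_{B_R(o)}|\nabla T|^2 e^{-f}\vol_g$ as an output rather than an input. Finally, this computation is the integrated form of the pointwise Bochner identity $\tfrac{1}{2}\Delta_f |T|^2 = g\big(\nabla_f^{*}\nabla T, T\big) - |\nabla T|^2$, under which the hypothesis simply reads $\Delta_f |T|^2 \leq 0$; the present lemma is thus the $L^2$-analogue of the maximum-principle statement in Lemma~\ref{BochnerTechniqueWithDiffusion}.
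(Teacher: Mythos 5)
Your proof is correct and is precisely the argument the paper intends: the lemma is stated without a printed proof, introduced only by ``we can use integration to conclude,'' and your cutoff-function integration-by-parts scheme (with absorption of the gradient term and the $L^2$ hypothesis killing the error as $R \to \infty$) is the standard way to carry this out in the noncompact weighted setting. Two minor observations rather than gaps: you rightly make explicit the completeness of $(M,g)$, which the lemma leaves implicit but which is genuinely necessary (on an incomplete manifold the statement fails), your argument never uses the hypothesis $\int_M e^{-f}\vol_g<\infty$, showing it is superfluous for this particular lemma, and strictly speaking the Lipschitz cutoffs $\phi_R$ should be smoothed (or the adjoint identity extended to Lipschitz test tensors by approximation) before applying the formal adjoint relation --- a routine point.
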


\section{Weighted Lichnerowicz Laplacians}

The idea of this section is to define a weighted curvature tensor $\widetilde{\Rm}$ so that for a given symmetric tensor $S$ the curvature term of the Lichnerowicz Laplacian satisfies
\begin{gather*}
g( \Ric_{\Rm}(T) - (S)T, T ) = g\big( \Ric_{\widetilde{\Rm}}(T),T\big).
\end{gather*}

This will be achieved by adding a weight to the Ricci tensor of $\Rm$, leaving the Weyl curvature unchanged. The specific weight will depend on the irreducible components of the tensors of type~$T$, e.g., it is different for forms and symmetric tensors.

Let $T$ be a $(0,k)$-tensor. For $\tau_{ij} \in S_k$ let $T \circ \tau_{ij}$ denote the transposition of the $i$-th and $j$-th entries of $T$ and for $h \in \Sym^2(V)$ let $c_{ij}(h \otimes T)$ denote the contraction of $h$ with the $i$-th and $j$-th entries of $T$.

\begin{Proposition}\label{GeneralFormulaBochnerOnHWedgeG}
For $h \in \Sym^2(V)$ let $H \colon V \to V$ denote the associated symmetric operator. If $T \in \mathcal{T}^{(0,k)}(V)$, then
\begin{gather*}
\Ric_{h \owedge g}(T) (X_1, \dots, X_k)
=   2 \sum_{i \neq j} ( T \circ \tau_{ij}) (X_1, \dots, H(X_i), \dots, X_k) \\
\hphantom{\Ric_{h \owedge g}(T) (X_1, \dots, X_k)=}{}
 - \sum_{i \neq j} g(X_i, X_j) c_{ij}(h \otimes T)( X_1, \dots, \widehat{X}_i, \dots, \widehat{X}_j, \dots, X_k ) \\
\hphantom{\Ric_{h \owedge g}(T) (X_1, \dots, X_k)=}{} - \sum_{i \neq j} h(X_i, X_j) c_{ij}(g \otimes T)( X_1, \dots, \widehat{X}_i, \dots, \widehat{X}_j, \dots, X_k ) \\
\hphantom{\Ric_{h \owedge g}(T) (X_1, \dots, X_k)=}{} -(n-2) (HT)(X_1, \dots, X_k) + k \cdot \tr(h) T( X_1, \dots, X_k).
\end{gather*}
\end{Proposition}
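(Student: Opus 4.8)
The plan is to reduce everything to the action of the curvature operator of $h \owedge g$ on $V$ and then perform the orthonormal-basis sum in the definition of the curvature term. First I would read off from the Kulkarni--Nomizu formula the endomorphism $(h \owedge g)(X,Y)\colon V \to V$ determined by $g((h\owedge g)(X,Y)Z,W) = (h \owedge g)(X,Y,Z,W)$, namely
\begin{gather*}
(h \owedge g)(X,Y)Z = h(X,Z)Y - g(Y,Z)H(X) + g(X,Z)H(Y) - h(Y,Z)X.
\end{gather*}
Setting $X=X_i$ and $Y=e_j$ in $\Ric_{h\owedge g}(T)$ and expanding the endomorphism action $(LT)(\cdots) = -\sum_m T(\cdots, L(\cdot), \cdots)$, the double sum over $i$ and $j$ naturally splits according to whether $(h\owedge g)(X_i,e_j)$ acts on the $i$-th slot, which now carries $e_j$, or on a different slot $m \neq i$.

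For the diagonal part $m=i$ I would compute $\sum_j (h\owedge g)(X_i,e_j)e_j$. The completeness relations $\sum_j g(e_j,X)e_j = X$ and $\sum_j h(X_i,e_j)e_j = H(X_i)$, together with $\sum_j g(e_j,e_j)=n$ and $\sum_j h(e_j,e_j)=\tr(h)$, collapse this to $(2-n)H(X_i) - \tr(h)X_i$. Passing this back through the endomorphism sign and summing over $i$, and using $\sum_i T(\cdots, H(X_i), \cdots) = -(HT)$, yields exactly the last line $-(n-2)(HT) + k\,\tr(h)T$.

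For the off-diagonal part $m\neq i$ I would expand $(h\owedge g)(X_i,e_j)X_m$ into its four Kulkarni--Nomizu terms and sum each over $j$. Summing $e_j$ simultaneously in the $i$-th and $m$-th slots is precisely the contraction $c_{im}$, so the term $g(X_i,X_m)H(e_j)$ yields $-\sum_{i\neq j} g(X_i,X_j)c_{ij}(h\otimes T)$ and the term $h(X_i,X_m)e_j$ yields $-\sum_{i\neq j} h(X_i,X_j)c_{ij}(g\otimes T)$. The remaining two terms, $-g(e_j,X_m)H(X_i)$ and $-h(e_j,X_m)X_i$, each collapse via completeness to a single copy of the transposed tensor $T\circ\tau_{im}$: the former gives $(T\circ\tau_{im})(\cdots, H(X_i), \cdots)$ with $H(X_i)$ in the $i$-th slot, while the latter gives the same expression with $H(X_m)$ in the $m$-th slot. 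Relabeling $i\leftrightarrow m$ in the sum over ordered pairs identifies these two, producing the factor $2$ in the first line.

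I expect the sole genuine difficulty to be bookkeeping: tracking which inserted vector occupies which argument, carrying the sign from the endomorphism action consistently through every term, and matching the collapsed expressions to the transpositions $T\circ\tau_{ij}$ and contractions $c_{ij}$ exactly as defined. The factor $2$ is the most delicate point, since it does not arise from a single term but from the symmetry of the double sum under $i\leftrightarrow m$, so one must verify that the two a priori distinct transposition terms genuinely coincide after relabeling.
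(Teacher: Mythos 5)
Your proposal is correct and follows essentially the same route as the paper's proof: both pass to the $(1,3)$-tensor form of $h \owedge g$, insert it into the definition of $\Ric_{h \owedge g}(T)$, split according to which slot the induced endomorphism acts on, and collapse the orthonormal-basis sums via completeness, and your bookkeeping (the diagonal contribution $-(n-2)HT + k\tr(h)T$, the two contraction terms, and the factor $2$ from relabeling the ordered pairs) all checks out. The only cosmetic difference is that the paper first splits the operator as $R(X,Y) = H(X)\wedge Y + X \wedge H(Y)$ and sums each wedge separately, so its factor $2$ arises from adding the two halves rather than from your $i \leftrightarrow m$ symmetry argument.
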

\begin{proof}The algebraic curvature tensor $R = h \owedge g$ satisfies
\begin{gather*}
R(X,Y,Z,W) =  g(H(X),Z)g(Y,W)-g(Y,Z)g(H(X),W) \\
\hphantom{R(X,Y,Z,W) =}{} +g(X,Z)g(H(Y),W)-g(H(Y),Z)g(X,W)
\end{gather*}
and hence
\begin{gather*}
R(X,Y)Z =  ( H(X) \wedge Y + X \wedge H(Y)  ) Z
\end{gather*}
is the corresponding $(1,3)$-tensor. It follows that
\begin{gather*}
\Ric_{h \owedge g}(T)(X_1, \dots, X_k) =   \sum_{i=1}^k \sum_{a=1}^n  ( R(X_i, e_a)T  ) ( X_1, \dots, e_a, \dots, X_k) \\
\hphantom{\Ric_{h \owedge g}(T)(X_1, \dots, X_k)}{} =   \sum_{i=1}^k \sum_{a=1}^n (( H(X_i) \wedge e_a )T)( X_1, \dots, e_a, \dots, X_k) \\
\hphantom{\Ric_{h \owedge g}(T)(X_1, \dots, X_k)=}{}  + \sum_{i=1}^k \sum_{a=1}^n (( X_i \wedge H(e_a) )T)( X_1, \dots, e_a, \dots, X_k).
\end{gather*}

It is straightforward to calculate
\begin{gather*}
\sum_{i=1}^k \sum_{a=1}^n  ((X_i \wedge H(e_a)) T)(X_1, \dots, e_a, \dots, X_k) \\
\qquad{} =  \sum_{i \neq j} \sum_{a=1}^n T( X_1, \dots, (H(e_a) \wedge X_i) X_j, \dots, e_a, \dots, X_k) \\
 \qquad\quad{} + \sum_{i=1}^k \sum_{a=1}^n T(X_1, \dots, (H(e_a) \wedge X_i)e_a, \dots, X_k) \\
\qquad {}=  \sum_{i \neq j} \sum_{a=1}^n T( X_1, \dots, g(H(e_a), X_j) X_i - g(X_i, X_j)H(e_a), \dots, e_a, \dots, X_k) \\
\qquad\quad{}+ \sum_{i=1}^k \sum_{a=1}^n T(X_1, \dots, g(H(e_a),e_a) X_i - g(e_a, X_i) H(e_a), \dots, X_k) \\
\qquad {}=  \sum_{i \neq j} \sum_{a=1}^n T( X_1, \dots, g(e_a, H(X_j)) X_i, \dots, e_a, \dots, X_k) \\
\qquad\quad{} - \sum_{i \neq j} \sum_{a=1}^n g(X_i, X_j) T( X_1, \dots, H(e_a), \dots, e_a, \dots, X_k) \\
 \qquad\quad{}+ \sum_{i=1}^k \sum_{a=1}^n h(e_a,e_a) T(X_1, \dots, X_k)
 - \sum_{i=1}^k \sum_{a=1}^n T(X_1, \dots, H  ( g(e_a, X_i)e_a  ), \dots, X_k) \\
\qquad {}=  \sum_{i \neq j} T ( X_1, \dots, X_i, \dots, H(X_j), \dots, X_k) \text{ [here } X_i \text{ is in the j-th position]} \\
\qquad\quad{}- \sum_{i \neq j} \sum_{a,b=1}^n  g(X_i, X_j) h(e_a, e_b) T( X_1, \dots, e_b, \dots, e_a, \dots, X_k)
 + k \cdot \tr( h ) T(X_1, \dots, X_k) \\
\qquad\quad{} - \sum_{i=1}^k T(X_1, \dots, H(X_i), \dots, X_k) \\
\qquad {}=  \sum_{i \neq j} (T \circ \tau_{ij}) ( X_1, \dots, H(X_j), \dots, X_i, \dots, X_k) \text{ [here } H(X_j) \text{ is in the j-th position]} \\
\qquad\quad{} - \sum_{i \neq j} g(X_i, X_j) c_{ij} (h \otimes T)( X_1, \dots, \widehat{X_i}, \dots, \widehat{X_j}, \dots, X_k) \\
 \qquad\quad{} + k \cdot \tr( h ) T(X_1, \dots, X_k)  + (HT)(X_1, \dots, X_k).
\end{gather*}

Similarly one computes
\begin{gather*}
\sum_{i=1}^k \sum_{a=1}^n   ((H(X_i) \wedge e_a) T)(X_1, \dots, e_a, \dots, X_k) \\
\qquad{} =   \sum_{i \neq j} (T \circ \tau_{ij}) ( X_1, \dots, X_j, \dots, H(X_i), \dots, X_k) \text{ [here } X_j \text{ is in the j-th position]} \\
\qquad\quad{}  -\! \sum_{i \neq j} h(X_i, X_j) c_{ij} (g \otimes T)\big(X_1, {\dots}, \widehat{X_i}, {\dots}, \widehat{X_j}, {\dots}, X_k\big)   - (n-1) (HT) (X_1, {\dots}, X_k) .
\end{gather*}

Adding up both terms yields $\Ric_{h \owedge g}(T)$ as claimed.
\end{proof}

\begin{Proposition}\label{WeightedCurvatureTerm}
Let $(V,g)$ be an $n$-dimensional Euclidean vector space and $h \in \Sym^2(V)$. The following hold:
\begin{enumerate}\itemsep=0pt
\item[$1.$] Every $T \in \operatorname{Sym}^2(V)$ satisfies
\begin{gather*}
\Ric_{h \owedge g}(T)   = - n HT - 2 \langle T,h \rangle g-2 \tr(T)h + 2 \tr(h) T , \\
g(\Ric_{h \owedge g}(T),T)   = - n g(HT,T) -4 \tr(T) \langle T, h \rangle + 2 \tr(h) |T|^2.
\end{gather*}
\item[$2.$] Every $p$-form $\omega$ satisfies
\begin{gather*}
\Ric_{h \owedge g}(\omega)   = - (n-2p) H \omega + p \tr(h) \omega, \\
g(\Ric_{h \owedge g}(\omega), \omega)   = - (n-2p) g(H \omega, \omega) + p \tr(h) |\omega|^2.
\end{gather*}
\item[$3.$] Every algebraic $(0,4)$-curvature tensor $\Rm$ satisfies
\begin{gather*}
\Ric_{h \owedge g}(\Rm)  = -2  ( h \owedge \Ric  ) -2 g \owedge ( c_{24} ( h \otimes \Rm  ) ) -(n-2) H \Rm + 4 \tr(h) \Rm.
\end{gather*}
\end{enumerate}
\end{Proposition}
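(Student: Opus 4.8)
The plan is to derive all three identities by specializing the general formula of Proposition~\ref{GeneralFormulaBochnerOnHWedgeG} to the respective symmetry type of $T$ and then collapsing its transposition and contraction terms. Recall that the general formula writes $\Ric_{h\owedge g}(T)$ as the sum of a transposition term $2\sum_{i\neq j}(T\circ\tau_{ij})(\dots,H(X_i),\dots)$, the two contraction terms built from $c_{ij}(h\otimes T)$ and $c_{ij}(g\otimes T)$, and the two diagonal terms $-(n-2)(HT)$ and $k\,\tr(h)T$. In each case the work is to evaluate the first three terms using the symmetries of $T$, after which the scalar identities follow by pairing with $T$ and using $g(g,T)=\tr(T)$ and $g(h,T)=\langle h,T\rangle$.

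For part~1 I would set $k=2$ and use $T\circ\tau_{12}=T$. The transposition term becomes $2\bigl(T(H(X_1),X_2)+T(X_1,H(X_2))\bigr)=-2(HT)$, which together with the diagonal $-(n-2)(HT)$ produces $-n\,HT$. The contractions collapse to the scalars $c_{12}(h\otimes T)=\langle h,T\rangle$ and $c_{12}(g\otimes T)=\tr(T)$, each counted twice for the two orderings of $(i,j)$, giving $-2\langle T,h\rangle g$ and $-2\tr(T)h$; the diagonal $2\tr(h)T$ completes the formula. For part~2 I would instead exploit that $\omega$ is totally antisymmetric: contracting the symmetric tensors $h$ and $g$ against any two slots of $\omega$ vanishes, so both contraction terms drop out. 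The identity $\omega\circ\tau_{ij}=-\omega$ turns each transposition summand into $-\omega(\dots,H(X_i),\dots)$; summing over the $p-1$ choices of $j$ for each fixed $i$ and using $\sum_i\omega(\dots,H(X_i),\dots)=-(H\omega)$ yields $2(p-1)(H\omega)$, which combined with $-(n-2)(H\omega)$ gives $-(n-2p)H\omega$. Adding the diagonal $p\,\tr(h)\omega$ finishes the formula, and pairing with $\omega$ produces the scalar version.

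Part~3 is the main obstacle, since $\Rm$ has mixed symmetry. Here I would split the index pairs $(i,j)$ into those lying inside a single antisymmetric block, $\{1,2\}$ or $\{3,4\}$, and those crossing the two blocks. Because $h$ and $g$ are symmetric, the contractions $c_{ij}(h\otimes\Rm)$ and $c_{ij}(g\otimes\Rm)$ vanish for in-block pairs, so only the cross-block contractions survive; using the pair symmetry of $\Rm$ these reorganize into $-2\,g\owedge c_{24}(h\otimes\Rm)$ and, since $c_{ij}(g\otimes\Rm)$ is a Ricci contraction, into $-2\,h\owedge\Ric$. The genuinely delicate piece is the transposition term: the in-block transpositions collapse directly to a multiple of $H\Rm$, but the cross-block transpositions produce reshuffled curvature terms that must be rewritten with the first Bianchi identity.

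After that rewriting the surplus terms cancel in pairs and the remaining $H$-on-a-slot contributions combine with the diagonal $-(n-2)(H\Rm)$, while the diagonal $4\,\tr(h)\Rm$ survives unchanged, yielding the stated identity. I expect the bulk of the effort to lie precisely in the bookkeeping of these twelve transposition terms and the repeated use of the Bianchi identity to convert cross-block reshufflings back into $H\Rm$; the contraction terms, by contrast, are comparatively automatic once the in-block vanishing is observed.
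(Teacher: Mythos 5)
Your proposal is correct and follows essentially the same route as the paper: all three parts are obtained by specializing Proposition~\ref{GeneralFormulaBochnerOnHWedgeG} to the symmetry type of the tensor, with your treatments of parts~1 and~2 (transposition term $-2(HT)$ merging with the diagonal to give $-nHT$; vanishing of the contraction terms for forms and $2(p-1)H\omega$ from the transpositions) matching the paper's computations essentially verbatim. The one place the paper is tidier than your plan for part~3: the entire transposition sum $\sum_{i\neq j}(\Rm\circ\tau_{ij})(\dots,H(X_i),\dots)$ collapses to the cyclic sum $(H\Rm)(X_1,X_2,X_3,X_4)+(H\Rm)(X_2,X_3,X_1,X_4)+(H\Rm)(X_3,X_1,X_2,X_4)$, which vanishes identically by the first Bianchi identity, so there are no ``remaining $H$-on-a-slot contributions'' left over to combine with anything---the coefficient $-(n-2)$ of $H\Rm$ in the final formula comes solely from the diagonal term of the general formula.
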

\begin{proof}
(a) Due to the symmetry of $T$ it follows that
\begin{gather*}
\Ric_{h \owedge g}(T)(X_1,X_2) =   2 \lbrace T(H(X_1), X_2) + T(X_1, H(X_2) \rbrace \\
\hphantom{\Ric_{h \owedge g}(T)(X_1,X_2) =}{} - 2 \lbrace g(X_1, X_2) \langle h, T \rangle + h(X_1, X_2) \tr(T) \rbrace \\
\hphantom{\Ric_{h \owedge g}(T)(X_1,X_2) =}{} - (n-2) (HT)(X_1, X_2) + 2 \tr(h) T(X_1, X_2).
\end{gather*}

(b) Since $\omega \circ \tau_{ij} = - \omega$ for every transposition $\tau_{ij}$ it follows that
\begin{align*}
\sum_{i \neq j} (\omega \circ \tau_{ij})(X_1, \dots, H(X_i), \dots, X_p)& =
  - \sum_{i \neq j} \omega(X_1, \dots, H(X_i), \dots, X_p) \\
& =   - (p-1) \sum_{i=1}^p \omega(X_1, \dots, H(X_i), \dots, X_p) \\
& =   (p-1) (H \omega)(X_1, \dots, X_p)
\end{align*}
and furthermore $c_{ij}(g \otimes \omega) = c_{ij}(h \otimes \omega)=0$ for all $i \neq j$. This implies the claim.

(c) The symmetries of the curvature tensor imply that
\begin{gather*}
  \sum_{i \neq j} ( \Rm \circ \tau_{ij} )(X_1, \dots, H(X_i), \dots, X_4)   \\
\!\!\qquad{}= (H \Rm)(X_1, X_2, X_3, X_4) + (H \Rm)( X_2, X_3, X_1, X_4) + (H \Rm)( X_3, X_1, X_2, X_4)=0
\end{gather*}
due to the first Bianchi identity.

Computing with respect to an orthonormal eigenbasis of $H$ it follows that
\begin{gather*}
( g( \cdot, \cdot ) c_{12}(h \otimes \Rm)) (X, Y, Z, W) =   0, \\
( g( \cdot, \cdot ) c_{13}(h \otimes \Rm)) (X, Y, Z, W) =
  \sum_{a,b=1}^n g(X,Z) \Rm(g(H(e_a),e_b) e_b, Y, e_a, W) \\
\hphantom{( g( \cdot, \cdot ) c_{13}(h \otimes \Rm)) (X, Y, Z, W)}{} =   \sum_{a=1}^n g(X,Z) \Rm(H(e_a), Y, e_a, W) \\
\hphantom{( g( \cdot, \cdot ) c_{13}(h \otimes \Rm)) (X, Y, Z, W)}{} =   \sum_{a=1}^n g(Z,X) \Rm(e_a, Y, H(e_a), W) \\
\hphantom{( g( \cdot, \cdot ) c_{13}(h \otimes \Rm)) (X, Y, Z, W)}{} =   ( g( \cdot, \cdot ) c_{31}(h \otimes \Rm)) (X, Y, Z, W).
\end{gather*}
This implies
\begin{gather*}
\sum_{i \neq j} ( g( \cdot, \cdot )   c_{ij}(h \otimes \Rm)) (X, Y, Z, W) \\
\qquad{}=   2 \sum_{i=1}^n \lbrace g(X,Z) \Rm(H(e_i), Y, e_i, W) + g(X,W) \Rm(H(e_i), Y, Z, e_i) \\
\qquad\quad{} + g(Y,Z) \Rm(X, H(e_i), e_i, W ) + g(Y,W) \Rm(X, H(e_i), Z, e_i) \rbrace \\
\qquad {}=   2 \sum_{i=1}^n \lbrace g(X,Z) \Rm(Y, H(e_i), W, e_i) - g(X,W) \Rm(Y, H(e_i), Z, e_i) \\
\qquad\quad{}- g(Y,Z) \Rm( X, H(e_i), W, e_i) + g(Y,W) \Rm(X, H(e_i), Z, e_i) \rbrace \\
\qquad {}=   2 \left( g \owedge \left[ \sum_{i=1}^n \Rm( \cdot, H(e_i), \cdot, e_i) \right] \right) (X, Y, Z,W) \\
\qquad {} =   2 \left( g \owedge c_{24} ( h \otimes \Rm) \right) (X, Y, Z,W).
\end{gather*}
Similarly it follows that
\begin{gather*}
\sum_{i \neq j} ( h( \cdot, \cdot ) c_{ij}(g \otimes \Rm))
= 2 \left( h \owedge c_{24} ( g \otimes \Rm) \right)
= 2 \left( h \owedge \Ric \right).
\end{gather*}
This completes the proof.
\end{proof}

\begin{Remark} For a Weyl tensor $W$ and $h$ a symmetric $(0,2)$-tensor it is not hard to check that $\Ric_{h \owedge g}(W)$ satisfies
\begin{gather*}
g( \Ric_{h \owedge g}(W),W)   = - (n-2) g( HW, W) + 4 \tr(h) |W|^2, \\
g\big( \Ric_{h \owedge g}(W), g \owedge \mathring{\Ric}\big)
  = - 8(n-2)  \langle c_{24}(h \otimes W), \Ric \rangle
  = - 8(n-2) \big\langle c_{24}(\mathring{h} \otimes W), \mathring{\Ric} \big\rangle, \\
g( \Ric_{h \owedge g}(W), g \owedge g)   = 0.
\end{gather*}
It is worth noting that there are trace-free symmetric $(0,2)$-tensors $h_1$, $h_2$ such that the curvature tensor $h_1 \owedge h_2$ is Weyl.
\end{Remark}

The main Theorem follows as in Proposition~\ref{BochnerWithDiffusionForForms} below by using Lemma~\ref{BochnerTechniqueSMMS} instead of Lemma~\ref{BochnerTechniqueWithDiffusion}. The description of the de Rham cohomology groups follows from Remark~\ref{IsomorphismDeRhamColomology}.

\begin{Proposition}\label{BochnerWithDiffusionForForms}
Let $(M,g)$ be a Riemannian manifold and let $U$ be a vector field on $M$. Set $S= \nabla U$ and for $1 \leq p < \frac{n}{2}$ set
\begin{gather*}
H = \frac{1}{n-2p} S - \frac{1}{2(n-p)(n-2p)} \tr(S) I,
\end{gather*}
where $I \colon TM \to TM$ denotes the identity operator.

Suppose that the eigenvalues $\lambda_1 \leq \dots \leq \lambda_{\genfrac(){0pt}{2}{n}{2}}$ of the weighted curvature tensor $\Rm + h \owedge g$ satisfy
\begin{gather*}
\lambda_1 + \dots + \lambda_{n-p} \geq 0
\end{gather*}
and let $\omega$ be a $U$-harmonic $p$-form for $1 \leq p < \frac{n}{2}$.

If $| \omega |$ achieves a maximum, then $\omega$ is parallel. If in addition the inequality is strict, then $\omega$ vanishes.
\end{Proposition}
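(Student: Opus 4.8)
The plan is to run the Bochner argument of Lemma~\ref{BochnerTechniqueWithDiffusion}, after first absorbing the diffusion term $(\nabla U)\omega$ into a modified curvature tensor via Proposition~\ref{WeightedCurvatureTerm}(2), and then to quote the eigenvalue estimate for the Bochner operator on $p$-forms from \cite{PetersenWinkBochner} in order to convert the hypothesis on $\lambda_1+\dots+\lambda_{n-p}$ into nonnegativity of the curvature term.

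Since $\omega$ is $U$-harmonic, Proposition~\ref{BochnerFormulasExample}(a) gives
\[
\nabla^{*}_U \nabla \omega = - \Ric_U(\omega) = - \Ric_{\Rm}(\omega) + S\omega, \qquad S = \nabla U .
\]
The first step is to identify the correction $-g(S\omega,\omega)$ with a Kulkarni--Nomizu weight. Writing $H$ for the operator attached to $h$ and using $I\omega = -p\,\omega$ for the identity operator $I$, Proposition~\ref{WeightedCurvatureTerm}(2) yields $g(\Ric_{h\owedge g}(\omega),\omega) = -(n-2p)g(H\omega,\omega) + p\tr(h)|\omega|^2$. With the normalization $H = \frac{1}{n-2p}S - \frac{\tr(S)}{2(n-p)(n-2p)} I$ one computes $\tr(h) = \frac{\tr(S)}{2(n-p)}$, and the two $\tr(S)$-contributions cancel exactly, leaving
\[
g(\Ric_{h\owedge g}(\omega),\omega) = - g(S\omega,\omega).
\]
Because the Bochner operator is linear in its curvature argument, this shows
\[
g(\Ric_U(\omega),\omega) = g(\Ric_{\Rm}(\omega),\omega) - g(S\omega,\omega) = g\big(\Ric_{\Rm + h\owedge g}(\omega),\omega\big),
\]
so the diffusion term has been absorbed into the weighted curvature tensor $\widetilde{\Rm} = \Rm + h\owedge g$.

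Now I would invoke the algebraic estimate of \cite{PetersenWinkBochner}: if the curvature operator of $\widetilde{\Rm}$ has eigenvalues $\lambda_1 \le \dots \le \lambda_{\binom{n}{2}}$, then every $p$-form satisfies $g\big(\Ric_{\widetilde{\Rm}}(\omega),\omega\big) \ge (\lambda_1 + \dots + \lambda_{n-p})|\omega|^2$. Under the hypothesis $\lambda_1 + \dots + \lambda_{n-p} \ge 0$ the right-hand side is pointwise nonnegative, hence $g(\nabla^{*}_U\nabla\omega,\omega) = -g(\Ric_{\widetilde{\Rm}}(\omega),\omega) \le 0$, and Lemma~\ref{BochnerTechniqueWithDiffusion} shows $\omega$ is parallel whenever $|\omega|$ attains a maximum.

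For the strict statement, suppose $\lambda_1 + \dots + \lambda_{n-p} > 0$ and $|\omega|$ has a maximum. Then $\omega$ is already parallel by the previous paragraph, so $\nabla^{*}_U\nabla\omega = 0$ and the Bochner formula forces $g(\Ric_{\widetilde{\Rm}}(\omega),\omega) = 0$ everywhere; but the estimate now gives $g(\Ric_{\widetilde{\Rm}}(\omega),\omega) \ge (\lambda_1+\dots+\lambda_{n-p})|\omega|^2 > 0$ wherever $\omega \ne 0$, so $\omega \equiv 0$. I expect the only genuine input to be the eigenvalue estimate of \cite{PetersenWinkBochner}, namely the translation of $(n-p)$-nonnegativity of the curvature operator into nonnegativity of the Bochner term; the first step is bookkeeping, with the one subtlety that only the symmetric part of $S = \nabla U$ enters the quadratic form $g(S\omega,\omega)$, so that $h$ may be taken symmetric and $h\owedge g$ is a genuine algebraic curvature tensor.
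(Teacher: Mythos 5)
Your proposal is correct and follows essentially the same route as the paper's own proof: absorb the diffusion term via Proposition~\ref{WeightedCurvatureTerm}(2) and the cancellation $(n-2p)H+\tr(h)I = S$, rewrite the Bochner formula with $\Ric_{\Rm+h\owedge g}$, quote the eigenvalue estimate from \cite{PetersenWinkBochner}, and finish with Lemma~\ref{BochnerTechniqueWithDiffusion}. Your explicit handling of the strict case (parallel first, then forcing $\omega\equiv 0$) and your remark that only the symmetric part of $\nabla U$ enters $g(S\omega,\omega)$ merely spell out what the paper leaves implicit.
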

\begin{proof}
Proposition \ref{WeightedCurvatureTerm} (b) and $- I \omega = p \omega$ imply that
\begin{align*}
g( \Ric_{h \owedge g} \omega, \omega ) & = - (n-2p) g( H \omega, \omega) + p \tr(h) | \omega|^2
  = - g( ( (n-2p)H+ \tr(h) I ) \omega, \omega ) \\
& = - g \left( \left( S- \frac{\tr(S)}{2(n-p)} I + \frac{\tr(S)}{2(n-p)} I \right) \omega, \omega \right)
  = - g( S \omega, \omega ).
\end{align*}
Thus the Bochner formula takes the form
\begin{gather*}
\Delta_U \omega = \nabla^{*}_U \nabla \omega + \Ric(\omega) - (\nabla U) \omega = \nabla^{*}_U \nabla \omega + \Ric_{\Rm + h \owedge g}(\omega).
\end{gather*}
The argument in \cite[proof of Theorem A]{PetersenWinkBochner} shows that $\Ric_{\Rm+h \owedge g}( \omega ) \geq 0$. Lemma \ref{BochnerTechniqueWithDiffusion} implies the claim.

If the inequality is strict, then the same argument shows that $\Ric_{\Rm+h \owedge g}( \omega ) > 0$ unless $\omega =0$.
\end{proof}

The above approach only works for $p = \frac{n}{2}$ if $S$ is a multiple of the identity. However, we have

\begin{Proposition}
Let $(M,g)$ be an $n$-dimensional Riemannian manifold and let $U$ be a vector field on $M$. Set $S= \nabla U$ and fix $1 \leq p \leq \big\lfloor \frac{n}{2}\big\rfloor$. Let $\mu_1 \leq \dots \leq \mu_n$ denote the eigenvalues of~$S$. Suppose that the weighted curvature tensor
\begin{gather*}
\Rm + \frac{\sum\limits_{i=1}^p \mu_i}{2p(n-p)} g \owedge g
\end{gather*}
is $(n-p)$-nonnegative. If $\omega$ is a $U$-harmonic $p$-form $\omega$ such that $|\omega|$ has a maximum, then $\omega$ is parallel. If in addition the weighted curvature tensor is $(n-p)$-positive, then $\omega$ vanishes.
\end{Proposition}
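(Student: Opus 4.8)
The plan is to verify the hypothesis $g(\nabla^*_U \nabla \omega, \omega) \le 0$ of Lemma~\ref{BochnerTechniqueWithDiffusion} and then, in the positive case, to upgrade parallelism to vanishing. Writing $c = \frac{\sum_{i=1}^p \mu_i}{2p(n-p)}$, the weight is $h = c\,g$ with associated operator $H = c\,I$. Since $\omega$ is $U$-harmonic, Proposition~\ref{BochnerFormulasExample}(a) together with $\Ric_U(\omega) = \Ric_{\Rm}(\omega) - S\omega$ gives
\begin{gather*}
g(\nabla^*_U \nabla \omega, \omega) = - g(\Ric_{\Rm}(\omega), \omega) + g(S\omega, \omega),
\end{gather*}
so everything reduces to the pointwise estimate $g(\Ric_{\Rm}(\omega),\omega) - g(S\omega,\omega) \ge 0$.

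My first step is to rewrite the diffusion contribution $g(S\omega,\omega)$ through the chosen weight. Using $I\omega = -p\,\omega$ and $\tr(h)=cn$, Proposition~\ref{WeightedCurvatureTerm}(b) yields
\begin{gather*}
\Ric_{c\,g \owedge g}(\omega) = -(n-2p)H\omega + p\tr(h)\omega = 2cp(n-p)\,\omega = \Big(\textstyle\sum_{i=1}^p \mu_i\Big)\omega,
\end{gather*}
where the last equality is the precise reason for the normalisation of $c$; note that this identity is valid also for $p = \frac{n}{2}$, which is where the argument of Proposition~\ref{BochnerWithDiffusionForForms} broke down. The key algebraic input is then the comparison $-g(S\omega,\omega) \ge \big(\sum_{i=1}^p\mu_i\big)|\omega|^2$. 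Since the pairing only sees the symmetric part of $S$, I diagonalise $S$ in an orthonormal eigenbasis; the induced derivation acts on the basis $p$-form $e^{i_1}\wedge\cdots\wedge e^{i_p}$ with eigenvalue $-(\mu_{i_1}+\cdots+\mu_{i_p})$, and the smallest such sum of $p$ eigenvalues is $\mu_1 + \cdots + \mu_p$. This gives the comparison, and hence
\begin{gather*}
g(\Ric_U(\omega),\omega) = g(\Ric_{\Rm}(\omega),\omega) - g(S\omega,\omega) \ge g\big(\Ric_{\Rm + c\,g\owedge g}(\omega),\omega\big).
\end{gather*}

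It remains to show the right-hand side is nonnegative. Here I invoke the curvature estimate from \cite{PetersenWinkBochner}: an algebraic curvature tensor that is $(n-p)$-nonnegative has nonnegative Bochner curvature term on $p$-forms, so $g(\Ric_{\Rm + c\,g\owedge g}(\omega),\omega)\ge 0$. Thus $g(\nabla^*_U\nabla\omega,\omega)\le 0$ and Lemma~\ref{BochnerTechniqueWithDiffusion} forces $\omega$ to be parallel. In the $(n-p)$-positive case the same reference gives strict positivity unless $\omega=0$; since $\omega$ is already parallel we have $\nabla^*_U\nabla\omega = 0$, whence $g(\Ric_{\Rm+c\,g\owedge g}(\omega),\omega) \le g(\Ric_U(\omega),\omega) = 0$ and therefore $\omega = 0$. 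I expect the main obstacle to be the eigenvalue comparison for $g(S\omega,\omega)$: it is precisely the choice of the $p$ smallest eigenvalues in $c$, matched against the minimal eigenvalue-sum of the derivation induced by $S$ on $p$-forms, that makes the normalisation work without dividing by $n-2p$ and so extends the conclusion to the middle dimension.
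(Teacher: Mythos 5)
Your proof is correct and follows essentially the same route as the paper: both hinge on the eigenbasis comparison $-g(S\omega,\omega)\ge(\mu_1+\cdots+\mu_p)|\omega|^2$ and on the Petersen--Wink estimate for $(n-p)$-nonnegative curvature operators, applied to the weighted tensor $\Rm + c\, g\owedge g$, followed by Lemma~\ref{BochnerTechniqueWithDiffusion} and the strictness argument for vanishing. The only cosmetic difference is that you convert $\big(\sum_{i=1}^p\mu_i\big)|\omega|^2$ into the exact curvature term $g(\Ric_{c\,g\owedge g}(\omega),\omega)$ via Proposition~\ref{WeightedCurvatureTerm}(b), whereas the paper shifts the curvature eigenvalues directly through the identity $\sum_\alpha|\Xi_\alpha\omega|^2 = p(n-p)|\omega|^2$ from \cite{PetersenWinkBochner}.
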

\begin{proof}Calculating with respect to an orthonormal eigenbasis for $S$ it follows that
\begin{gather*}
- g( (S \omega), \omega) = - \sum_{i_1 < \dots < i_p} (S \omega )_{i_1 \dots i_p} \omega_{i_1 \dots i_p}
 = \sum_{i_1 < \dots < i_p} \left( \sum_{j=1}^p \mu_{i_j} \right) ( \omega_{i_1 \dots i_p } )^2
 \geq \left( \sum_{i=1}^p \mu_{i} \right) | \omega |^2.
\end{gather*}
Let $\lbrace \lambda_{\alpha} \rbrace$ denote the eigenvalues of
(the curvature operator associated to)
$\Rm$ and let $\lbrace \Xi_{\alpha} \rbrace$ be an orthonormal eigenbasis. It follows from \cite[Proposition 1.6]{PetersenWinkBochner} that
\begin{align*}
g( \Ric_{\Rm}( \omega), \omega ) - g( S \omega, \omega)  \geq \sum_{\alpha} \lambda_{\alpha} | \Xi_{\alpha} \omega |^2 + \left( \sum_{i=1}^p \mu_i \!\right) | \omega |^2
  = \sum_{\alpha} \!\left(\! \lambda_{\alpha} + \frac{\sum\limits_{i=1}^p \mu_i}{p(n-p)}\! \right) | \Xi_{\alpha} \omega |^2.
\end{align*}
The proof can now be completed as in Proposition~\ref{BochnerWithDiffusionForForms}.
\end{proof}

This principle can also be applied to $(0,2)$-tensors.

\begin{Proposition}Let $T \in \Sym^2(V)$ with $\tr(T)=0$, let $S= \nabla U$ and set
\begin{gather*}
H=\frac{S}{n} - \frac{\tr(S)}{2n^2} I.
\end{gather*}

Let $\lambda_1 \leq \dots \leq \lambda_{\genfrac(){0pt}{2}{n}{2}}$ denote the eigenvalues of the weighted curvature tensor $\Rm + h \owedge g$ and suppose that
\begin{gather*}
\lambda_{1} + \dots + \lambda_{\floor{\frac{n}{2}}} \geq 0.
\end{gather*}
If $T$ is $U$-harmonic and $|T|$ has a maximum, then~$T$ is parallel. If in addition the inequality is strict, then $T$ vanishes.
\end{Proposition}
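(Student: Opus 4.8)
The plan is to follow the template of Proposition~\ref{BochnerWithDiffusionForForms}, using Lemma~\ref{BochnerTechniqueWithDiffusion} in place of the integral argument. First I would show that $U$-harmonicity together with the specific choice of $h$ forces $g(\nabla^{*}_U \nabla T, T) \leq 0$. The weight $h$ is engineered precisely so that the zeroth order term $-(\nabla U)T = -ST$ in the Lichnerowicz Laplacian is absorbed into the curvature term of the weighted tensor $\widetilde{\Rm} = \Rm + h \owedge g$. Concretely, from $\Delta_U T = 0$ and $S = \nabla U$ one has $\nabla^{*}_U \nabla T = -\Ric_{\Rm}(T) + ST$, so it suffices to prove $g(\Ric_{\Rm}(T),T) - g(ST,T) = g(\Ric_{\widetilde{\Rm}}(T),T)$ and that the right-hand side is nonnegative.

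The first task is the algebraic identification. By Proposition~\ref{WeightedCurvatureTerm}(1) and $\tr(T)=0$,
\begin{gather*}
g(\Ric_{h \owedge g}(T),T) = -n\, g(HT,T) + 2 \tr(h) |T|^2 .
\end{gather*}
Using the convention $(LT)(X,Y) = -T(L(X),Y)-T(X,L(Y))$ one finds $IT = -2T$, so from $H = \frac{S}{n} - \frac{\tr(S)}{2n^2} I$ it follows that $n\,HT = ST + \frac{\tr(S)}{n} T$ and $\tr(h)=\tr(H) = \frac{\tr(S)}{2n}$. Substituting these gives
\begin{gather*}
g(\Ric_{h \owedge g}(T),T) = -\Big( g(ST,T) + \tfrac{\tr(S)}{n}|T|^2 \Big) + \tfrac{\tr(S)}{n}|T|^2 = -g(ST,T).
\end{gather*}
Since the curvature term is linear in the algebraic curvature tensor, $g(\Ric_{\widetilde{\Rm}}(T),T) = g(\Ric_{\Rm}(T),T) - g(ST,T)$, and therefore $g(\nabla^{*}_U \nabla T, T) = -g(\Ric_{\widetilde{\Rm}}(T),T)$.

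It remains to show $g(\Ric_{\widetilde{\Rm}}(T),T) \geq 0$, which is the crux of the argument. Here I would apply the spectral estimate of \cite{PetersenWinkBochner}, as in Proposition~\ref{BochnerWithDiffusionForForms}, but now for the action of the curvature operator on trace-free symmetric $(0,2)$-tensors: writing $\{\Xi_\alpha\}$ for an orthonormal eigenbasis of $\widetilde{\Rm}$ with eigenvalues $\lambda_\alpha$, one has $g(\Ric_{\widetilde{\Rm}}(T),T) \geq \sum_\alpha \lambda_\alpha |\Xi_\alpha T|^2$, and the combinatorial lemma of \cite{PetersenWinkBochner} bounds this sum below in terms of the lowest eigenvalues. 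The main obstacle is precisely that for trace-free symmetric $(0,2)$-tensors the relevant threshold is $\lfloor n/2 \rfloor$ rather than $n-p$: this rests on the representation-theoretic estimates for how $\mathfrak{so}(n)$ acts on $\Sym^2_0(V)$, in particular on a uniform bound for each $|\Xi_\alpha T|^2$ relative to the total $\sum_\alpha |\Xi_\alpha T|^2$. Granting this, the hypothesis $\lambda_1 + \dots + \lambda_{\lfloor n/2 \rfloor} \geq 0$ yields $g(\Ric_{\widetilde{\Rm}}(T),T) \geq 0$, hence $g(\nabla^{*}_U \nabla T, T) \leq 0$, and Lemma~\ref{BochnerTechniqueWithDiffusion} shows $T$ is parallel. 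If the inequality is strict, the same estimate gives $g(\Ric_{\widetilde{\Rm}}(T),T) > 0$ unless $T = 0$, forcing $T$ to vanish.
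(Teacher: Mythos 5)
Your proposal is correct and follows essentially the same route as the paper: the same algebraic cancellation $g(\Ric_{h \owedge g}(T),T) = -g(ST,T)$ via Proposition~\ref{WeightedCurvatureTerm}, the same appeal to the estimates of \cite{PetersenWinkBochner} for trace-free symmetric $(0,2)$-tensors to get the $\lfloor n/2 \rfloor$ threshold (with trace-freeness also used in the strict case), and the same Bochner-technique conclusion. The only (harmless) deviations are that you use the definition of $U$-harmonicity, $\Delta_U T = 0$, directly where the paper routes through the symmetric-tensor Bochner formula of Proposition~\ref{BochnerFormulasExample}(b), and that you finish with the maximum-principle Lemma~\ref{BochnerTechniqueWithDiffusion}, which in fact matches the stated hypothesis that $|T|$ attains a maximum better than the paper's own citation of the integration Lemma~\ref{BochnerTechniqueSMMS}.
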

\begin{proof}Proposition \ref{WeightedCurvatureTerm}(a) implies that
\begin{align*}
g  ( \Ric_{h \owedge g}(T), T ) & = - n g \left( \left(H + \frac{\tr(h)}{n} I \right)T ,T \right) \\
&  = - n g \left( \left( \frac{S}{n} - \frac{\tr(S)}{2n^2} I + \frac{\tr(S)}{2n^2} I \right) T, T \right)
  = - g( ST,T ).
\end{align*}
It follows from Proposition \ref{BochnerFormulasExample}(b) that
\begin{gather*}
\left( \nabla_X \nabla^{*}_U T \right)(X) + \left( \nabla^{*}_U d^{\nabla} T \right) (X,X) = \left( \nabla^{*}_U \nabla T \right)(X,X)+ \frac{1}{2} \left( \Ric_{\Rm+h \owedge g} T \right) (X,X).
\end{gather*}
As in \cite[Lemma 2.1 and Proposition~2.9]{PetersenWinkBochner} we conclude that $\Ric_{\Rm + h \owedge g}(T) \geq 0$. When the inequality is strict, the argument shows moreover $\Ric_{\Rm + h \owedge g}(T) > 0$ unless $T = 0$. This uses again that $T$ is trace-less.

An application of Lemma \ref{BochnerTechniqueSMMS} as before implies the claim.
\end{proof}

\subsection*{Acknowledgements}

We would like to thank the referees for useful comments.

\pdfbookmark[1]{References}{ref}
\LastPageEnding


\begin{thebibliography}{99}
\footnotesize\itemsep=0pt

\bibitem{BakryEmeryDiffusion}
Bakry D., \'{E}mery M., Diffusions hypercontractives, in S\'eminaire de
 probabilit\'es, {XIX}, 1983/84, \textit{Lecture Notes in Math.}, Vol.~1123,
 \href{https://doi.org/10.1007/BFb0075847}{Springer}, Berlin, 1985, 177--206.

\bibitem{BakryQianVolumeComparison}
Bakry D., Qian Z., Volume comparison theorems without {J}acobi fields, in
 Current Trends in Potential Theory, \textit{Theta Ser. Adv. Math.}, Vol.~4,
 Theta, Bucharest, 2005, 115--122.

\bibitem{LichnerowiczBETensor}
Lichnerowicz A., Vari\'et\'es riemanniennes \`a tenseur {C} non n\'egatif,
 \textit{C.~R.~Acad. Sci. Paris S\'er.~A-B} \textbf{271} (1970), A650--A653.

\bibitem{LottGeometryBETensor}
Lott J., Some geometric properties of the {B}akry--\'{E}mery--{R}icci tensor,
 \href{https://doi.org/10.1007/s00014-003-0775-8}{\textit{Comment. Math. Helv.}} \textbf{78} (2003), 865--883, \href{https://arxiv.org/abs/math.DG/0211065}{arXiv:math.DG/0211065}.

\bibitem{PetersenRiemGeom}
Petersen P., Riemannian geometry, 3rd ed., \textit{Graduate Texts in Mathematics}, Vol.~171, \href{https://doi.org/10.1007/978-3-319-26654-1}{Springer}, Cham, 2016.

\bibitem{PetersenWinkBochner}
Petersen P., Wink M., New curvature conditions for the {B}ochner technique,
 \href{https://arxiv.org/abs/1908.09958v3}{arXiv:1908.09958v3}.

\bibitem{QianEstimatesWeightedVolume}
Qian Z., Estimates for weighted volumes and applications, \href{https://doi.org/10.1093/qmath/48.2.235}{\textit{Quart.~J.
 Math. Oxford}} \textbf{48} (1997), 235--242.

\bibitem{WeiWylieComparosionGeoBE}
Wei G., Wylie W., Comparison geometry for the {B}akry--\'{E}mery {R}icci tensor,
 \href{https://doi.org/10.4310/jdg/1261495336}{\textit{J.~Differential Geom.}} \textbf{83} (2009), 377--405, \href{https://arxiv.org/abs/0706.1120}{arXiv:0706.1120}.

\end{thebibliography}
\end{document}